\newtheorem{theorem}{Theorem}[section]
\newtheorem{lem}[theorem]{Lemma}
\newtheorem{cor}[theorem]{Corollary}
\newtheorem{main}{Theorem}
\newtheorem{maincor}[main]{Corollary}
\newcommand{\inverse}[1]{#1^{-1}}
\newcommand{\centerof}[1]{\mathbf{Z}(#1)}
\newcommand{\centralizer}[2]{\mathbf{C}_{#1}(#2)}
\newcommand{\id}{\{\,1\,\}}
\begin{document}

\title{The Cyclic Graph of a Z-group}

\author[D.G.\ Costanzo]{David G.\ Costanzo}
\address{Department of Mathematical Sciences, Kent State University, Kent, OH 44242, USA}
\email{dcostan2@kent.edu}

\author[M.L.\ Lewis]{Mark L.\ Lewis}
\address{Department of Mathematical Sciences, Kent State University, Kent, OH 44242, USA}
\email{lewis@math.kent.edu}

\author[S.\ Schmidt]{Stefano Schmidt}
\address{Department of Mathematics, Columbia University, New York, NY 10027, USA}
\email{sas2393@columbia.edu}

\author[E.\ Tsegaye]{Eyob Tsegaye}
\address{Department of Mathematics, Stanford University, Stanford, California 94305, USA}
\email{eytsegay@stanford.edu}

\author[G. Udell]{Gabe Udell}
\address{Department of Mathematics, Pomona College, Claremont, CA 91711, USA}
\email{grua2017@mymail.pomona.edu}

\keywords{cyclic graph, $Z$-group, enhanced power graph}
\subjclass{Primary: 20D40 Secondary: 05C25}

\maketitle

\begin{abstract}
For a group $G$, we define a graph $\Delta(G)$ by letting $G^{\#}=G\setminus\id$ be the set of vertices and by drawing an edge between distinct elements $x,y\in G^{\#}$ if and only if the subgroup $\langle x,y\rangle$ is cyclic.
Recall that a $Z$-group is a group where every Sylow subgroup is cyclic.
In this short note, we investigate $\Delta(G)$ for a $Z$-group $G$.
\end{abstract}
	
\section{Introduction}

The groups under consideration in this note are finite.
Let $G$ be a group, and define a graph $\Delta(G)$ associated with $G$ as follows.
Take $G^{\#}=G\setminus\id$ as the vertex set.
Then, draw an edge between distinct vertices $x,y\in G^{\#}$ if and only if the subgroup $\langle x,y\rangle$ is cyclic.
We shall refer to $\Delta(G)$ as the \textit{cyclic graph} of $G$, although we note that the graph $\Delta(G)$ has also been called the \textit{deleted enhanced power graph}.
See, for example, \cite{bera}.
The \textit{enhanced power graph} includes the identity element as a vertex, and so the enhanced power graph of a group is always connected.
A brief investigation of this graph was undertaken in \cite{cameron}.

The cyclic graph of a group $G$ was investigated in \cite{imp} and \cite{implewis}.
In those papers, classification results were obtained under the assumption that the connected components of $\Delta(G)$ were complete graphs.
In our previous paper \cite{directprod}, we studied the cyclic graph of a direct product.

Next, we mention another graph that can be attached to a group.
Let $G$ be a nonabelian group.
The \textit{commuting graph} of $G$, denoted by $\Gamma(G)$, is the graph whose vertices are the non-central elements of $G$ and whose edges connected distinct vertices $x$ and $y$ if and only if $xy=yx$.
The commuting graph of a finite solvable group with trivial center was classified in \cite{parker}.

Recall that a group is called a \textit{$Z$-group} if every Sylow subgroup is cyclic.
Observe that a Frobenius complement of odd order is a $Z$-group, and so is any group of square-free order.
Our focus in this short note is the graph $\Delta(G)$ for a $Z$-group $G$.
We have been able to characterize the disconnectedness of $\Delta(G)$.

\begin{main}
Let $G$ be a $Z$-group.
Then $\Delta(G)$ is disconnected if and only if $G$ is a Frobenius group.
\end{main}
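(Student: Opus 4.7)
The plan uses the structure theorem for $Z$-groups: $G = N \rtimes H$ with $N$ a cyclic normal Hall subgroup, $H$ cyclic, and $\gcd(|N|, |H|) = 1$.

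For the direction $(\Leftarrow)$, suppose $G$ is a Frobenius group with kernel $K$. The defining property gives $\centralizer{G}{x} \leq K$ for every $x \in K^{\#}$, so any $y \in G^{\#}$ adjacent to $x$ in $\Delta(G)$ satisfies $y \in \langle x, y\rangle \leq \centralizer{G}{x} \leq K$. Thus $K^{\#}$ is a union of connected components of $\Delta(G)$, and since $G \setminus K$ is non-empty, $\Delta(G)$ is disconnected.

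For $(\Rightarrow)$ I argue the contrapositive. If $G$ is cyclic, $\Delta(G)$ is complete, so assume $G$ is non-cyclic. I would first verify the following characterization: for a $Z$-group with decomposition $G = N \rtimes H$ as above, $G$ is Frobenius if and only if the action of $H$ on $N$ is fixed-point-free. The nontrivial direction here uses that in any Frobenius group the kernel coincides with the Fitting subgroup, together with a short calculation showing $\centralizer{H}{N} \neq \id$ forces $\fit{G} \supsetneq N$ with $\centralizer{H}{N} \leq \centerof{G}$, which obstructs a fixed-point-free complement of $\fit{G}$. Granted this, non-Frobenius-ness supplies $h_0 \in H^{\#}$ and $n_0 \in N^{\#}$ with $h_0 n_0 = n_0 h_0$; since $\gcd(|h_0|,|n_0|) = 1$, $\langle h_0, n_0\rangle$ is cyclic, so $h_0 \sim n_0$ in $\Delta(G)$. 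As $N$ is cyclic, $N^{\#}$ is a clique, and the goal becomes showing that every vertex lies in its component.

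Given $g \in G^{\#}$, decompose $\langle g\rangle = \langle g_1\rangle \times \langle g_2\rangle$ into the $\pi$-part and $\pi'$-part of $g$, where $\pi = \pi(|N|)$. Then $g_1 \in N$ (since $N$ is the unique normal Hall $\pi$-subgroup of $G$) and $g_2$, being a $\pi'$-element, lies in some conjugate $H^{*} = H^{g^{*}}$ of $H$ by Schur--Zassenhaus. If $g_1 \neq 1$, then $g \sim g_1 \in N^{\#}$. If $g_1 = 1$, then $g = g_2 \in H^{*}$; conjugating the commuting pair $(h_0, n_0)$ by $g^{*}$ produces commuting $h_0^{g^{*}} \in (H^{*})^{\#}$ and $n_0^{g^{*}} \in N^{\#}$ (using $N \nsub G$), yielding the chain $g \sim h_0^{g^{*}} \sim n_0^{g^{*}}$ into $N^{\#}$. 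The main obstacle is the structural characterization of Frobenius-ness in terms of the action of $H$ on $N$; the chain-building argument is then routine bookkeeping, available because $\gcd(|N|, |H|) = 1$.
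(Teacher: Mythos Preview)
Your argument for the theorem is correct and follows the same strategy as the paper: extract a commuting pair across $N=G'$ and its complement $H$ from the non-Frobenius hypothesis, then link any $g\in G^{\#}$ to $N^{\#}$ either via a power of $g$ landing in $N$ or via a conjugate of that pair. Two points of comparison are worth making.

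First, you only need the easy implication of your characterization (fixed-point-free action $\Rightarrow$ Frobenius, hence non-Frobenius $\Rightarrow$ a commuting pair exists); the direction you label ``nontrivial'' is irrelevant here, and your sketch of it conflates ``the action of $H$ on $N$ is not fixed-point-free'' with $\centralizer{H}{N}\neq\id$, which is strictly stronger. The paper sidesteps all of this by invoking the normal-subgroup Frobenius criterion directly: if $\centralizer{G}{g}\le G'$ for every $g\in(G')^{\#}$ then $G$ is Frobenius with kernel $G'$, so the negation immediately supplies $g_{0}\in(G')^{\#}$ with $\centralizer{G}{g_{0}}\nleq G'$, and decomposing any element of $\centralizer{G}{g_{0}}\setminus G'$ along $G'H$ produces the required $h\in H^{\#}$.

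Second, the paper uses that $G'$ is abelian to route every vertex to the \emph{single} vertex $g_{0}$ (since $g_{0}^{a}=g_{0}$ for $a\in G'$), thereby also obtaining $\textnormal{diam}(\Delta(G))\le 4$ in the same breath. Your chains terminate at varying conjugates $n_{0}^{g^{*}}\in N^{\#}$, so as written you get connectedness but only diameter $\le 5$ via the clique $N^{\#}$; replacing $n_{0}^{g^{*}}$ by the fixed $n_{0}$ (again because $N$ is abelian) recovers the sharper bound.
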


Next, if we assume that the graph $\Delta(G)$ is connected for a $Z$-group $G$, then a diameter bound follows.

\begin{main}
If $G$ is a $Z$-group and $\Delta(G)$ is connected, then $\textnormal{diam}(\Delta(G))\le 4$.
\end{main}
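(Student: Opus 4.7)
The plan is to use the standard structure of a $Z$-group, writing $G = A \rtimes B$ with $A$ a cyclic normal Hall subgroup, $B$ a cyclic complement, and $\gcd(|A|,|B|) = 1$. The case $A = \id$ is immediate, since then $G$ is cyclic and $\Delta(G)$ is complete. Assuming $A \ne \id$, my goal is to exhibit a single vertex $a_0 \in A \setminus \id$ with $d(x,a_0) \le 2$ for every $x \in G^{\#}$; the triangle inequality then forces $\textnormal{diam}(\Delta(G)) \le 4$.

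To choose $a_0$, I would invoke Theorem~A: since $\Delta(G)$ is connected, $G$ is not Frobenius, so some $b_0 \in B \setminus \id$ satisfies $\centralizer{A}{b_0} \ne \id$, and I pick any $a_0 \in \centralizer{A}{b_0} \setminus \id$. The crucial observation is that $\centralizer{A}{b_0^{g}} = \centralizer{A}{b_0}$ as subsets of $A$ for every $g \in G$. This holds because every subgroup of the cyclic group $A$ is characteristic in $A$, and since $A \nsub G$, every subgroup of $A$ is setwise fixed under $G$-conjugation; combining this with the identity $\centralizer{A}{b_0^g} = g \cdot \centralizer{A}{b_0} \cdot g^{-1}$ (which itself uses $A \nsub G$) yields the claim. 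In particular $a_0 \in \centralizer{A}{b_0^g}$ for every $g$.

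With $a_0$ fixed, the distance bound splits into cases. If $\langle x \rangle \cap A \ne \id$, pick a nontrivial $z \in \langle x \rangle \cap A$; then $\{x,z\}$ is an edge (both in cyclic $\langle x \rangle$) and $\{z,a_0\}$ is an edge (both in cyclic $A$), so $d(x,a_0) \le 2$. Otherwise $|x|$ is coprime to $|A|$, so $x$ is a $\pi(B)$-element and, by Hall's theorem in the solvable group $G$, lies in some conjugate $B^g$. If $a_0 \in \centralizer{A}{x}$ then $\langle x,a_0\rangle$ is cyclic (commuting elements of coprime orders) and $d(x,a_0)=1$. Otherwise I use $b_0^g$ as a bridge: $b_0^g \ne x$ (else $a_0 \in \centralizer{A}{b_0^g} = \centralizer{A}{x}$, contradicting the case), and $b_0^g$ is a common neighbor of $x$ (both lie in the cyclic group $B^g$) and of $a_0$ (they commute with coprime orders), so again $d(x,a_0) \le 2$.

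The only nonroutine ingredient I expect is the identity $\centralizer{A}{b_0^g} = \centralizer{A}{b_0}$, which is where cyclicity of $A$ is essential: without it, each conjugate $B^g$ could single out a different centralizer in $A$, forcing a bridge through two generic vertices of the clique $A \setminus \id$ and yielding only the weaker bound $5$. Once that identity is in hand, everything else is a bookkeeping exercise in the coprime-order decomposition of cyclic subgroups of the metacyclic group $G$.
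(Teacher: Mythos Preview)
Your proof is correct and follows the same strategy as the paper: fix a hub element $a_0$ in the cyclic normal Hall subgroup (the paper's $g_0\in G'$) that is centralized by some nontrivial $b_0$ in the complement, then show every vertex lies within distance $2$ of $a_0$ by splitting on whether its order meets $\pi(A)$. The only variation is in the second case: where you argue that subgroups of the cyclic group $A$ are characteristic to obtain $\centralizer{A}{b_0^{g}}=\centralizer{A}{b_0}$, the paper instead observes that every conjugate of the complement has the form $H^{a}$ with $a\in G'$ and then uses $g_0^{a}=g_0$ (since $G'$ is abelian)---an equivalent shortcut.
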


The next result describes a relationship between the graph $\Delta(G)$ and the subgroup $\centerof{G}$ for a $Z$-group $G$.

\begin{main}
If $G$ is a $Z$-group, then $\textnormal{diam}(\Delta(G))\le 2$ if and only if $\centerof{G}\neq \id$.
\end{main}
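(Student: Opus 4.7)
The plan uses the fact that in a $Z$-group every abelian subgroup is cyclic (its Sylow subgroups embed into the cyclic Sylow subgroups of $G$), so adjacency in $\Delta(G)$ coincides with commutativity on $G^{\#}$. With this dictionary in hand, the forward direction is immediate: for any $z \in \centerof{G}\setminus\id$ and any $x \in G^{\#} \setminus \{z\}$, the subgroup $\langle x, z \rangle$ is abelian and hence cyclic, so $z$ is adjacent to every other vertex of $\Delta(G)$, giving $\textnormal{diam}(\Delta(G)) \le 2$.

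For the converse, I assume $\centerof{G} = \id$; then $G$ is nonabelian, and by the structure theorem for $Z$-groups I can write $G = N \rtimes K$ with $N = \langle a \rangle$ and $K = \langle b \rangle$ cyclic of coprime orders. The candidate pair to exhibit distance greater than $2$ is $x = b$ and $y = aba^{-1}$; note $x \ne y$ because otherwise $a$ and $b$ commute and $G$ is abelian. The centralizer computations to check are: (i) $C_N(b) \le \centerof{G} = \id$ (any element of $N$ centralizing $b$ also commutes with $N$ and $b$, hence with $G = NK$), so $C_G(b) = K$; (ii) $C_G(aba^{-1}) = aKa^{-1}$ by conjugation; (iii) $K \cap aKa^{-1} = \id$. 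Given these, $C_G(x) \cap C_G(y) = \id$, so $x$ and $y$ share no common neighbor, and they are themselves non-adjacent because $b \in aKa^{-1}$ would place $b$ in the trivial intersection $K \cap aKa^{-1}$. Therefore $d(x, y) > 2$.

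The main obstacle is (iii). The plan is: take $k \in K \cap aKa^{-1}$, write $k = ak'a^{-1}$ for some $k' \in K$, and observe that $[k, a] = k^{-1}a^{-1}ka = k^{-1}k' \in K$. On the other hand, $N \nsub G$ makes $k^{-1}a^{-1}k \in N$, so $[k, a] = (k^{-1}a^{-1}k)\cdot a$ also lies in $N$, forcing $[k,a] \in K \cap N = \id$. Hence $k$ centralizes $a$, and applying the argument of (i) to $k \in K$ places $k$ in $\centerof{G} = \id$. This is where the normality of $N$ and the triviality of $\centerof{G}$ interlock, and it is the crux of the proof; the remaining steps are routine once the adjacency-equals-commuting dictionary is in place.
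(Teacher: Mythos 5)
Your proposal is correct, and the forward direction is essentially the paper's: a nontrivial central element is a dominating vertex, hence $\textnormal{diam}(\Delta(G))\le 2$; you get adjacency from the fact that abelian subgroups of a $Z$-group are cyclic, while the paper instead passes to a central element $z$ of prime order $p$ and uses that $\langle z\rangle$ is the unique subgroup of order $p$. Where you genuinely diverge is the converse. The paper argues directly: assuming $\textnormal{diam}(\Delta(G))\le 2$, it takes a generator $x$ of $G'$ and a generator $h$ of a cyclic complement $H$ and observes that either $x\approx h$ (so $G$ is abelian) or any common neighbor $z$ satisfies $G'\le\centralizer{G}{z}$ and $H\le\centralizer{G}{z}$, hence $G=G'H\le\centralizer{G}{z}$ and $z\in\centerof{G}^{\#}$. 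You instead prove the contrapositive with the witness pair $b$, $aba^{-1}$, which forces the extra centralizer work (i)--(iii): $\centralizer{N}{b}\le\centerof{G}=\id$ giving $\centralizer{G}{b}=K$ (the omitted step is routine: write $g=nk$ and use $k\in\centralizer{G}{b}$), and the commutator argument showing $K\cap aKa^{-1}\le\centralizer{K}{a}\le\centerof{G}=\id$; all of these steps check out. Note, though, that with your dictionary (adjacency equals commuting) the simpler pair $a$, $b$ already finishes in one line: a common neighbor would centralize $\langle a,b\rangle=NK=G$, contradicting $\centerof{G}=\id$, and $a\approx b$ would make $G$ abelian --- which is exactly the paper's argument read contrapositively. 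So your route is valid and self-contained, just more computational than necessary; what it buys is an explicit extra fact the paper does not record, namely that in a $Z$-group with trivial center a cyclic complement $K$ meets its conjugate $K^{a}$ trivially.
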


Following \cite{bera}, a vertex $z$ in $\Delta(G)$ is called a \textit{dominating vertex} if $z$ is adjacent to every vertex in $\Delta(G)\setminus\{z\}$.
The terms \textit{complete vertex}, \textit{cone vertex}, and \textit{universal vertex} have also been used as synonyms for a dominating vertex.
If the graph $\Delta(G)$ has a dominating vertex, we shall say that $\Delta(G)$ is \textit{dominatable}.
In the proof of the previous theorem, we end up establishing the existence of a dominating vertex.
We point out a necessary and sufficient condition for a dominating vertex in $\Delta(G)$ to exist, which answers a request in \cite{bera} for a characterization of a group with a dominatable cyclic graph.

\begin{main}
Let $G$ be a group, $g\in G$, and $\pi=\pi(o(g))$.
Write $g=\prod_{p\in\pi}g_{p}$, where each $g_{p}$ is a $p$-element for $p\in\pi$ and $g_{p}g_{q}=g_{q}g_{p}$ for all $p,q\in\pi$.
Then $g$ is a dominating vertex for $\Delta(G)$ if and only if, for each $p\in\pi$, a Sylow $p$-subgroup $P$ of $G$ is cyclic or generalized quaternion and $\langle g_{p}\rangle\le P\cap\centerof{G}$.
\end{main}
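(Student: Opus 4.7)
The plan is to prove both implications by tracking the Sylow decomposition of $\langle g, x\rangle$ as $x$ ranges over $G^{\#}\setminus\{g\}$.

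For the forward direction I would first observe that since $g$ is dominating, the subgroup $\langle g, x\rangle$ is cyclic (hence abelian) for every $x\in G^{\#}\setminus\{g\}$, so $g$ commutes with every element of $G$ and therefore $g\in \centerof{G}$; in particular each $g_{p}\in \centerof{G}$, so $\langle g_{p}\rangle$ is a central $p$-subgroup and lies in every Sylow $p$-subgroup of $G$, verifying $\langle g_{p}\rangle\le P\cap\centerof{G}$. I would then show, for each $p\in\pi$, that a Sylow $p$-subgroup $P$ has a unique subgroup of order $p$; by the classical structure theorem for such $p$-groups, this forces $P$ to be cyclic or (for $p=2$) generalized quaternion. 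To see this, let $z$ denote the unique element of order $p$ in the cyclic group $\langle g_{p}\rangle$ and take $y\in P$ with $o(y)=p$. If $y\neq g$, then $\langle g_{p},y\rangle\le \langle g,y\rangle$ is a cyclic $p$-group containing both $\langle z\rangle$ and $\langle y\rangle$, forcing them to coincide. If $y=g$, then $o(g)=p$, $g=g_{p}$, and $\langle y\rangle=\langle g_{p}\rangle=\langle z\rangle$. Either way every subgroup of order $p$ in $P$ equals $\langle z\rangle$.

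For the reverse direction I would assume the stated hypotheses; then each $g_{p}\in\centerof{G}$, so $g\in\centerof{G}$ commutes with every element of $G$. Fix $x\in G^{\#}\setminus\{g\}$; I need $\langle g,x\rangle$ cyclic. Since $g$ and $x$ commute, $\langle g,x\rangle$ is abelian and decomposes as the internal direct product $\prod_{q}\langle g_{q},x_{q}\rangle$ over the relevant primes $q$, so it suffices to show each factor is cyclic. When $q\notin\pi$ we have $g_{q}=1$ and the factor equals $\langle x_{q}\rangle$. When $q\in\pi$, since $g_{q}$ is central in $G$ it lies in the Sylow $q$-subgroup $Q$ containing $x_{q}$; if $Q$ is cyclic every subgroup is cyclic, while if $Q$ is generalized quaternion then $\langle g_{q}\rangle\le Q\cap\centerof{G}\le\centerof{Q}$, which has order $2$, forcing $g_{q}$ to be the unique involution of $Q$. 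This involution lies in every nontrivial cyclic subgroup of $Q$, so $\langle g_{q},x_{q}\rangle=\langle x_{q}\rangle$ is cyclic in that case as well.

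The main obstacle will be the generalized quaternion case: one must combine $|\centerof{Q}|=2$ with the fact that every nontrivial cyclic subgroup of a generalized quaternion group contains the unique involution, which together force $g_{q}$ to be that involution and then allow it to be absorbed into $\langle x_{q}\rangle$. Beyond this, the argument reduces to a routine Sylow-by-Sylow analysis leveraging the centrality of $g$.
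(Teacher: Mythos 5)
Your proposal is correct and follows essentially the approach the paper intends: the paper gives no separate proof of this theorem, remarking only that it ``has essentially the same proof'' as its Theorem 3.3 (dominating vertex $\Leftrightarrow$ unique central subgroup of prime order), and your argument is exactly that proof written out in full --- centrality of a dominating vertex, passing to the order-$p$ subgroup of $\langle g_{p}\rangle$ inside the cyclic group $\langle g,y\rangle$ to get uniqueness in $P$, and for the converse a Sylow-by-Sylow reduction using coprime commuting elements and the unique minimal subgroup of a cyclic or generalized quaternion group. Only cosmetic slips remain: ``the unique element of order $p$ in $\langle g_{p}\rangle$'' should read ``the unique subgroup of order $p$,'' and in the converse the case $x_{q}=1$ with $q\in\pi$ should be noted (the factor is then $\langle g_{q}\rangle$, which is cyclic anyway).
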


As a corollary, we offer a generalization of Theorem 3.2 in \cite{bera}.

\begin{maincor}
For a nilpotent group $G$, the graph $\Delta(G)$ is dominatable if and only if $G$ has a cyclic or generalized quaternion Sylow subgroup.
\end{maincor}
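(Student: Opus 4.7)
The plan is to deduce this corollary directly from Theorem D by exploiting the fact that a nilpotent group is the internal direct product of its Sylow subgroups. The forward direction is essentially immediate, while the reverse direction requires only a quick verification about the center of a direct product.

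For the forward direction, suppose $\Delta(G)$ has a dominating vertex $g$. Since $g\neq 1$, the set $\pi=\pi(o(g))$ is nonempty; picking any $p\in\pi$ and applying Theorem D gives that the Sylow $p$-subgroup of $G$ is cyclic or generalized quaternion, as desired.

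For the reverse direction, assume $G$ has a Sylow $p$-subgroup $P$ that is cyclic or generalized quaternion. Because $G$ is nilpotent, we may write $G=P\times H$ where $H$ is the Hall $p'$-subgroup of $G$. Since $P$ is a nontrivial $p$-group, $\centerof{P}\neq\id$, and from the direct-product decomposition one has $\centerof{G}=\centerof{P}\times\centerof{H}$, hence $P\cap\centerof{G}=\centerof{P}$. Choose any nontrivial $g\in\centerof{P}$; then $g$ is a $p$-element with $\pi(o(g))=\{p\}$, the unique $p$-part is $g_p=g$, and $\langle g\rangle\le P\cap\centerof{G}$. Theorem D then certifies that $g$ is a dominating vertex of $\Delta(G)$, so $\Delta(G)$ is dominatable.

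There is no substantive obstacle; the only point worth double-checking is that $P\cap\centerof{G}=\centerof{P}$ under the nilpotent hypothesis, which follows immediately from $G=P\times H$. I would present the argument in the two short directions above without further embellishment, since Theorem D does all the real work.
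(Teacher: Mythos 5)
Your argument is correct, but it takes a genuinely different route from the paper's. You deduce both directions from Theorem D, together with the nilpotent decomposition $G=P\times H$ and the identity $\centerof{G}=\centerof{P}\times\centerof{H}$, which gives $P\cap\centerof{G}=\centerof{P}\neq\id$ and hence an element meeting Theorem D's criterion; your forward direction does not even use nilpotency. The paper instead works from Theorem \ref{dominating}: for the forward direction it extracts a unique central subgroup $\langle x\rangle$ of order $p$, notes that $\langle x\rangle$ is then the unique subgroup of order $p$ in a Sylow $p$-subgroup $P$, and invokes the classification of $p$-groups with a unique subgroup of order $p$; for the converse it takes a generator $z$ of the unique subgroup of order $p$ of $G$ and verifies directly that $z\approx g$ for every vertex $g$ (commuting elements of coprime order when $p\nmid o(g)$, and $z$ a power of $g$ otherwise). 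Your version is shorter and exhibits the corollary as a formal consequence of Theorem D; its one soft spot is that the paper never writes out a proof of Theorem D (it is only remarked in Section \ref{sec: main results} to have ``essentially the same proof'' as Theorem \ref{dominating}), so the paper's proof is the more self-contained of the two. Both your proof and the paper's implicitly assume the cyclic or generalized quaternion Sylow subgroup is nontrivial, i.e.\ that $p$ divides $|G|$, so that shared convention is not a gap.
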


Let $G$ be a $Z$-group, and let $x,y\in G^{\#}$ be distinct.
If $x$ is adjacent to $y$ in $\Delta(G)$, then $xy=yx$.
In fact, the converse is true too.
So, in particular, if $\centerof{G}=\id$, then $\Gamma(G)$ and $\Delta(G)$ are the same graph.
In light of the previous results, we obtain the following corollary concerning the commuting graph of a $Z$-group with trivial center.

\begin{maincor}
If $G$ is a $Z$-group with $\centerof{G}=\id$ and $G$ is not a Frobenius group, then $\Gamma(G)$, the commuting graph of $G$, is connected with diameter $3$ or $4$.
\end{maincor}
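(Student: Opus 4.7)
The plan is to observe that, under the hypotheses, the commuting graph $\Gamma(G)$ and the cyclic graph $\Delta(G)$ are literally the same graph, and then read off the conclusion from Theorems A, B, and C.

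The first step is to justify the assertion made in the paragraph preceding the corollary: for a $Z$-group $G$ and distinct nontrivial $x,y\in G$, the edge condition $\langle x,y\rangle$ cyclic is equivalent to $xy=yx$. One direction is immediate since cyclic implies abelian. For the converse, if $xy=yx$ then $H=\langle x,y\rangle$ is abelian, and every Sylow subgroup of $H$ embeds into a Sylow subgroup of $G$, which is cyclic by the $Z$-group hypothesis. Thus $H$ is an abelian group all of whose Sylow subgroups are cyclic, and such a group is cyclic (its decomposition as a direct product of its Sylow subgroups is a product of cyclic groups of pairwise coprime orders).

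Next, since $\centerof{G}=\id$, every nontrivial element of $G$ is noncentral, so the vertex sets $G^{\#}$ of $\Delta(G)$ and $G\setminus\centerof{G}$ of $\Gamma(G)$ coincide. Combined with the edge equivalence above, this gives $\Delta(G)=\Gamma(G)$ as graphs. Now Theorem~A implies that, since $G$ is not Frobenius, $\Delta(G)$ is connected; Theorem~B then yields $\textnormal{diam}(\Delta(G))\le 4$; and Theorem~C (in its contrapositive form, with $\centerof{G}=\id$) gives $\textnormal{diam}(\Delta(G))>2$, hence $\textnormal{diam}(\Delta(G))\ge 3$. Transferring these conclusions to $\Gamma(G)$ completes the proof.

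There is essentially no obstacle here beyond the elementary verification that abelian plus cyclic-Sylows implies cyclic; the corollary is a direct packaging of the three main theorems once the identification $\Delta(G)=\Gamma(G)$ is in hand.
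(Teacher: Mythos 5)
Your proposal is correct and follows essentially the same route as the paper: identify $\Gamma(G)$ with $\Delta(G)$ via the observation that an abelian $Z$-group is cyclic (the paper's Lemma~2.1), then combine the Frobenius characterization of disconnectedness, the diameter bound $\le 4$, and the contrapositive of the center criterion to pin the diameter to $3$ or $4$. The only difference is that you reprove the lemma inline rather than citing it, which changes nothing of substance.
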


This research was conducted during an REU at Kent State University with the funding of NSF Grant DMS-1653002.
We thank the NSF for their support.
The first, third, fourth, and fifth authors also thank the faculty and staff at Kent State University for their hospitality.

\section{Notation and Preliminaries}

Let $G$ be a group, and let $x,y\in G$.
We shall write $x\approx y$ to indicate that the subgroup $\langle x,y\rangle$ is cyclic.
If $n$ is a positive integer, then $\pi(n)$ denotes the set of prime divisors of $n$.
For a group $G$, set $\pi(G)=\pi(|G|)$.
Fix a set of prime numbers $\pi$.
An element $x\in G$ is called a \textit{$\pi$-element} if every prime divisor of $o(x)$ is a member of $\pi$.
If every prime divisor of $o(x)$ lies outside of $\pi$, then $x$ is called a \textit{$\pi'$-element}.
In this case where $\pi=\{p\}$, we use the terms $p$-element and $p'$-element.
The set of prime numbers shall be denoted by $\mathbb{P}$.

Let $G$ be a group.
Notice that if $x,y\in G^{\#}$ are commuting elements with coprime orders, then $x\approx y$.
This fact gives us a useful way to build paths in $\Delta(G)$.
We also mention that conjugation preserve adjacency in $\Delta(G)$: specifically, if $x,y\in G^{\#}$ with $x\approx y$, then $x^{g}\approx y^{g}$ for each $g\in G$.

A graph related to the cyclic graph is the \textit{commuting graph}, which is defined as follows.
Let $G$ be a nonabelian group.
The commuting graph $\Gamma(G)$ is the graph whose vertices are the noncentral elements of $G$ and whose edges connect distinct nonidentity elements $x$ and $y$ if and only if $xy=yx$.
Taking the noncentral elements of $G$ as the vertices for $\Gamma(G)$ is fairly standard, although variations on the vertex set do exist.
If $G$ is a $Z$-group with a trivial center, then the vertex set of $\Gamma(G)$ is the same as the vertex set of $\Delta(G)$.
In fact, the edge sets are the same too; the following lemma also appears as a part of Theorem 30 in \cite{cameron}.

\begin{lem}\label{gamma=delta}
If $G$ is a $Z$-group with $\centerof{G}=\id$, then $\Delta(G)=\Gamma(G)$.
\end{lem}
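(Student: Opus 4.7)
The plan is to show that the two graphs have the same vertex set and the same edge set. The vertex sets agree immediately: since $\centerof{G}=\id$, the noncentral nonidentity elements are exactly the nonidentity elements, so both graphs have vertex set $G^{\#}$.

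For the edge sets, one inclusion is trivial. If $x,y\in G^{\#}$ are distinct with $x\approx y$, then $\langle x,y\rangle$ is cyclic, hence abelian, so $xy=yx$, giving an edge in $\Gamma(G)$.

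For the reverse inclusion, suppose $x,y\in G^{\#}$ are distinct with $xy=yx$. I would set $H=\langle x,y\rangle$, which is abelian. The key observation is that each Sylow subgroup of $H$ sits inside some Sylow subgroup of $G$: a Sylow $p$-subgroup of $H$ is a $p$-subgroup of $G$, hence contained in a Sylow $p$-subgroup of $G$, which is cyclic by the $Z$-group hypothesis. Subgroups of cyclic groups are cyclic, so every Sylow subgroup of $H$ is cyclic. Now $H$ is a finite abelian group whose Sylow subgroups are all cyclic; by the structure theorem for finite abelian groups, $H$ is the internal direct product of its Sylow subgroups, which is a direct product of cyclic groups of pairwise coprime order, and hence cyclic. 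Therefore $x\approx y$, giving the desired edge in $\Delta(G)$.

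There is no real obstacle here; the only point to be careful about is the standard fact that a finite abelian group with cyclic Sylow subgroups is itself cyclic, which I would invoke rather than reprove.
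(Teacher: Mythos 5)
Your proposal is correct and follows essentially the same route as the paper: the paper's proof notes that if $xy=yx$ then $\langle x,y\rangle$ is an abelian $Z$-group and hence cyclic, and your argument simply unpacks the two standard facts behind that sentence (subgroups of $Z$-groups are $Z$-groups, and abelian $Z$-groups are cyclic), while also spelling out the vertex-set agreement that the paper handles in the remarks preceding the lemma.
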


\begin{proof}
If $x,y\in G$ with $x\approx y$, then clearly $xy=yx$.
But notice that if $xy=yx$, then $\langle x,y\rangle$ is an abelian $Z$-group, which is therefore cyclic.
Hence $x\approx y$.
\end{proof}

Next, we make a few remarks about $Z$-groups.
Many properties of $Z$-groups are known.
For example, if $G$ is a $Z$-group, then $G$ is $p$-nilpotent for the smallest prime divisor $p$ of $|G|$.
We also know that $Z$-groups are solvable.
The specific results that we need in this paper are encapsulated in the following theorem.

\begin{theorem}[\cite{rose}, Theorem 10.26]
If $G$ is a $Z$-group, then the derived subgroup $G'$ is cyclic and the factor group $G/G'$ is cyclic.
Moreover, $G'$ is a Hall subgroup of $G$.
\end{theorem}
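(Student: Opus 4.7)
The plan is to proceed by induction on $|G|$. The base case where $G$ is cyclic is immediate, so for the inductive step let $p$ be the smallest prime dividing $|G|$ and fix $P\in\syl{p}{G}$; by hypothesis $P$ is cyclic.

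First I would produce a normal $p$-complement. Since $P$ is cyclic of order $p^{a}$, $|\textnormal{Aut}(P)| = p^{a-1}(p-1)$, and so $\normalizer{G}{P}/\centralizer{G}{P}$ embeds in $\textnormal{Aut}(P)$ with order simultaneously coprime to $p$ and dividing $p-1$. Every prime divisor of $|G|$ is at least $p$, so that quotient is trivial, and Burnside's transfer theorem yields a normal $p$-complement $H\nsub G$ with $G = H\rtimes P$. Since $H$ is a $Z$-group of strictly smaller order, induction applies: $H'$ is cyclic, $H/H'$ is cyclic, and $H'$ is a Hall subgroup of $H$. Because $G/H\cong P$ is abelian, $G'\le H$, so already $G'\cap P = \id$ and the prime $p$ can appear only in $|G:G'|$.

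The main obstacle is upgrading ``$G'$ abelian'' to ``$G'$ cyclic.'' Since $H'\characteristic H\nsub G$, the coprime action of $P$ on the cyclic group $H'$ must be by power automorphisms, and the analogous statement holds for the induced action on the cyclic quotient $H/H'$. Writing $H=\langle a,c\rangle$ with $\langle a\rangle = H'$ and $\bar c$ generating $H/H'$, one uses the coprime action to adjust $c$ and then carries out a direct commutator computation showing $G' = [H,H]\cdot[H,P]$ is contained in a single cyclic subgroup of $H$. Once $G'$ is cyclic, $G/G'$ is an abelian group whose Sylow subgroups are cyclic (inherited from $G$), and so $G/G'$ is itself cyclic.

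For the Hall condition, the prime $p$ is handled for free. For a prime $q\ne p$ I would observe that $G/H' = (H/H')\rtimes P$, so $G'/H' = (G/H')' = [H/H', P]$. Coprime action of $P$ on the abelian group $H/H'$ gives the decomposition $H/H' = [H/H', P] \times (H/H')^{P}$; because $H/H'$ is \emph{cyclic}, the two factors of this internal direct product must have coprime orders. Combining this with the inductive Hall property for $H'$ in $H$ shows that the $q$-parts of $|G'|$ and $|G:G'|$ cannot overlap, giving $\gcd(|G'|,|G:G'|) = 1$ and completing the induction.
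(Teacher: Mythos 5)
The paper does not prove this statement at all; it is quoted verbatim from Rose's book (Theorem 10.26), so your proposal can only be judged on its own merits, not against an internal argument. The outer scaffolding you set up is sound: since $P$ is abelian, $P\le\centralizer{G}{P}$, so $|\normalizer{G}{P}:\centralizer{G}{P}|$ is prime to $p$ and divides $p-1$, forcing $\normalizer{G}{P}=\centralizer{G}{P}$ and, via Burnside, a normal $p$-complement $H$ with $G=H\rtimes P$; the identity $G'=H'[H,P]$ (so $G'/H'=[H/H',P]$), the coprime decomposition $H/H'=[H/H',P]\times \mathbf{C}_{H/H'}(P)$, and the resulting Hall bookkeeping are all correct, as is the final remark that an abelian group with cyclic Sylow subgroups is cyclic.

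The genuine gap is the step you describe as ``one uses the coprime action to adjust $c$ and then carries out a direct commutator computation showing $G'=[H,H]\cdot[H,P]$ is contained in a single cyclic subgroup of $H$.'' That computation is never performed, and it is precisely the nontrivial content of the theorem; it is not routine, because the compatibility between the $P$-action on $H'$ and on $H/H'$ is exactly where the classical constraints (the $\gcd$ conditions in the metacyclic presentation) enter. Your framing also inverts the difficulty: you speak of ``upgrading $G'$ abelian to $G'$ cyclic,'' but $G'$ abelian has not been established anywhere in the proposal, and once it is, cyclicity is automatic since $G'$ is then an abelian $Z$-group; meanwhile the remarks about power automorphisms are vacuous (every automorphism of a cyclic group is a power map) and do no work. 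The missing idea can be supplied as follows: $H'$ is cyclic and normal in $G$, so $G/\centralizer{G}{H'}$ embeds in $\textnormal{Aut}(H')$, which is abelian, whence $G'\le\centralizer{G}{H'}$; since $G'/H'\cong[H/H',P]$ is a subgroup of the cyclic group $H/H'$, the group $G'$ is a central extension of $H'$ with cyclic quotient, hence abelian, hence cyclic. With that argument inserted in place of the asserted computation, your induction closes; without it, the proof of the key clause ``$G'$ is cyclic'' is missing.
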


Finally, we need to make an observation about Frobenius groups.
Recall that a group $G$ is a \textit{Frobenius group} if $G$ has a nontrivial proper subgroup $H$ such that $H\cap H^{g}=\id$ for each $g\in G\setminus H$.
The subgroup $H$ is called a \textit{Frobenius complement}.
Now, let $G$ be a Frobenius group with Frobenius complement $H$.
Frobenius groups are centerless, and so $\Gamma(G)$ and $\Delta(G)$ have the same vertex set.
In particular, $\Delta(G)$ is a spanning subgraph of $\Gamma(G)$.
Because $\centralizer{G}{h}\le H$ for each $h\in H^{\#}$, the graph $\Gamma(G)$ is disconnected.
(This fact appears as Lemma 3.1 in \cite{parker}.)
Hence $\Delta(G)$ is disconnected as well.

\section{Main Results}\label{sec: main results}

Our first theorem provides a necessary and sufficient condition for the cyclic graph of $Z$-group $G$ to be disconnected.
Additionally, a diameter bound of $\Delta(G)$ is available under the assumption that $\Delta(G)$ is connected.

\begin{theorem}\label{disconnected}
Let $G$ be a $Z$-group.
Then $\Delta(G)$ is disconnected if and only if $G$ is a Frobenius group.
Moreover, if $\Delta(G)$ is connected, then $\textnormal{diam}(\Delta(G))\le 4$.
\end{theorem}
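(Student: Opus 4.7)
My plan is to prove the nontrivial direction: if the $Z$-group $G$ is not Frobenius, then $\Delta(G)$ is connected with $\textnormal{diam}(\Delta(G)) \le 4$. (The converse has already been dispatched in the remarks at the end of Section~2.) The structural input is Theorem~10.26: setting $N = G'$, which is cyclic and Hall, and choosing a (necessarily cyclic) complement $H$ by Schur--Zassenhaus, one has $G = N \rtimes H$ with $\gcd(|N|, |H|) = 1$. I will use repeatedly that every $\pi(|N|)$-element of $G$ lies in $N$ and that every $\pi(|H|)$-element lies in some conjugate $H^g$ (by Hall's theorem for the solvable group $G$).

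If $\centerof{G} \ne \id$, pick any $z \in \centerof{G} \setminus \id$. For every $x \in G^{\#}$ the subgroup $\langle x, z\rangle$ is abelian, hence an abelian $Z$-group, hence cyclic, so $z$ is a dominating vertex and $\textnormal{diam}(\Delta(G)) \le 2$. (Frobenius groups are centerless, so $G$ is automatically non-Frobenius in this subcase.)

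The substantive case is $\centerof{G} = \id$ with $G$ not Frobenius, so there exist $h^* \in H^{\#}$ and $n^* \in N^{\#}$ with $h^* n^* = n^* h^*$. The crux of the argument is this: since $N$ is cyclic, each subgroup of $N$ is the unique subgroup of its order and hence characteristic in $N$; and $N = G'$ is characteristic in $G$. Therefore \emph{every} subgroup of $N$ is normal in $G$. Applied to $\centralizer{N}{h^*}$, a short conjugation calculation shows that $n^*$ commutes with every conjugate $(h^*)^g$. Since $n^*$ and $(h^*)^g$ have coprime orders, $\langle n^*, (h^*)^g\rangle$ is an abelian subgroup of the $Z$-group $G$ and hence cyclic, giving $n^* \approx (h^*)^g$ for every $g \in G$.

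I then show $d_{\Delta(G)}(x, n^*) \le 2$ for every $x \in G^{\#}$, which gives $\textnormal{diam}(\Delta(G)) \le 4$ by the triangle inequality. Decompose $\langle x\rangle$ by coprime orders as $\langle x_N\rangle \times \langle x_H\rangle$, so $x_N \in N$ and $x_H$ lies in some conjugate $H^g$. If $x_N \ne 1$, then $x_N$ is a power of $x$, hence $x \approx x_N$, and $x_N \approx n^*$ because both lie in the cyclic group $N$. If $x_N = 1$, then $x = x_H \in H^g \setminus \id$ and $(h^*)^g$ also lies in the cyclic group $H^g$, so $x \approx (h^*)^g$, while $(h^*)^g \approx n^*$ by the crux. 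The main obstacle is precisely this last subcase: a ``pure $H$-type'' element $x$ with $\centralizer{N}{x} = \id$ has no cyclic-graph neighbour in $N^{\#}$ at all, and only the carefully chosen vertex $(h^*)^g$ inside the conjugate $H^g$ is known a priori to bridge $x$ to the $N$-clique. The normality of subgroups of $N$ in $G$ is what allows a single $n^*$ to serve uniformly as a hub across all conjugates of $H$; without it the naive bound would be only $\textnormal{diam}(\Delta(G)) \le 6$.
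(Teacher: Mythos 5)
Your proposal is correct and follows essentially the same route as the paper: both extract from the non-Frobenius hypothesis a commuting pair $n^{*}\in (G')^{\#}$, $h^{*}\in H^{\#}$ and then show every vertex lies within distance $2$ of the hub $n^{*}$, reaching elements with nontrivial $G'$-part through that part and reaching $\pi(H)$-elements through the conjugate of $h^{*}$ inside their Hall subgroup. The only cosmetic differences are that you produce the commuting pair via the fixed-point-free-action (complement) characterization of Frobenius groups while the paper uses the kernel characterization, and you handle arbitrary conjugates of $h^{*}$ by noting $\langle n^{*}\rangle\trianglelefteq G$, whereas the paper chooses the conjugating element inside the abelian subgroup $G'$ so that its hub element is fixed.
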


\begin{proof}
Frobenius groups have disconnected cyclic graphs.
To prove the converse, assume that $G$ is not a Frobenius group.
We shall establish the connectedness of $\Delta(G)$.

Abelian $Z$-groups are cyclic, and so we may assume that $G$ is nonabelian.
Hence $\id < G'<G$.
If $\centralizer{G}{g}\le G'$ for each $g\in (G')^{\#}$, then $G$ is a Frobenius group with kernel $G'$, contrary to our hypothesis.
Hence there exists some $g_{0}\in (G')^{\#}$ with $\centralizer{G}{g_{0}}\nleq G'$.
Let $H$ be a complement for $G'$ in $G$.
Fix $x\in\centralizer{G}{g_{0}}\setminus G'$, and write $x=yh$ for $y\in G'$ and $h\in H$.
Then $g_{0}^{yh}=g_{0}^{x}=g_{0}$, and so $g_{0}^{\inverse{h}}=g_{0}^{y}=g_{0}$.
It follows that $h\in\centralizer{H}{g_{0}}$.

Now, let $g\in G^{\#}$.
If $\pi(o(g))\cap\pi(G')\neq\emptyset$, then let $p\in\pi(o(g))\cap\pi(G')$.
For a suitable integer $n$, $o(g^{n})=p$; hence $g^{n}\in G'$, and $g\approx g^{n}\approx g_{0}$.
Otherwise, $\pi(o(g))\cap\pi(G')=\emptyset$ and $g\in H^{a}$ for some $a\in G'$.
Note that $h^{a}\approx g_{0}^{a}=g_{0}$ as $h\approx g_{0}$ and conjugation preserves adjacency.
Hence $g\approx h^{a}\approx g_{0}$.
The result follows.
\end{proof}

The group \texttt{SmallGroup(60,7)} furnishes an example of a $Z$-group with connected cyclic graph of diameter $4$, and so the bound in the previous theorem is sharp.
We mention a few more examples.
The group \texttt{SmallGroup(210,2)} is a $Z$-group with connected cyclic graph of diameter $3$, and, finally, \texttt{SmallGroup(60,3)} provides an example of a $Z$-group with connected cyclic graph of diameter $2$.

\begin{figure}[h]
\centering

\begin{minipage}{0.8\textwidth}
\centering
\includegraphics[width=0.5\linewidth]{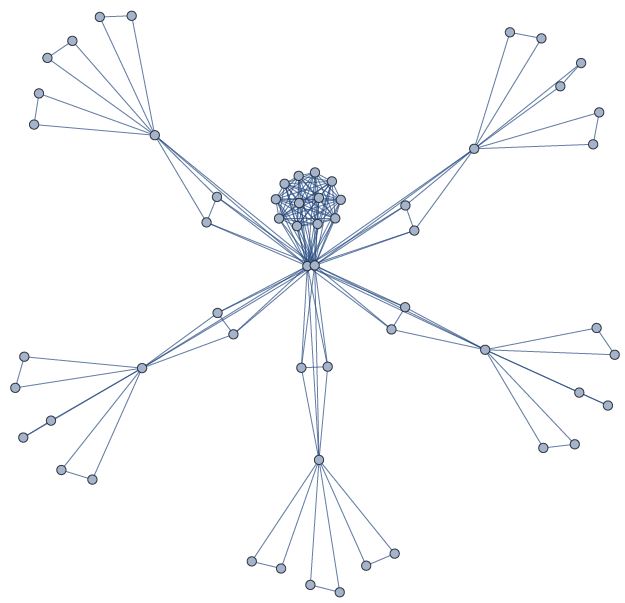}
\caption{\texttt{Cyclic Graph of SmallGroup(60,7)}}
\label{fig:60_7}
\end{minipage}\hfill

\begin{minipage}{0.8\textwidth}
\centering
\includegraphics[width=0.5\linewidth]{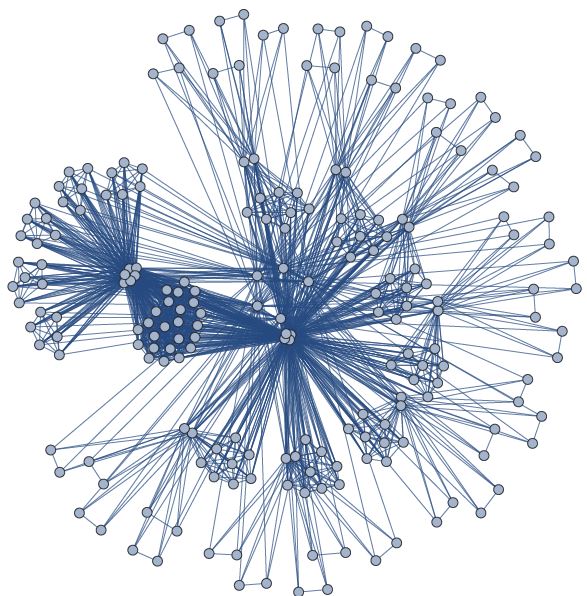}
\caption{\texttt{Cyclic Graph of SmallGroup(210,2)}}
\label{fig:2102}
\end{minipage}\hfill

\begin{minipage}{0.8\textwidth}
\centering
\includegraphics[width=0.5\linewidth]{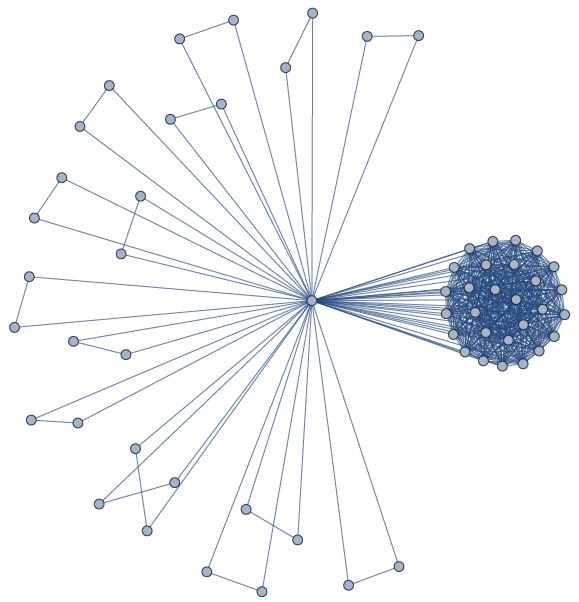}
\caption{\texttt{Cyclic Graph of SmallGroup(60,3)}}
\label{fig:60_3}
\end{minipage}

\end{figure}

The next theorem highlights a connection between the subgroup $\centerof{G}$ and the graph $\Delta(G)$ for a $Z$-group $G$.

\begin{theorem}\label{nontrivialcenterdiameter2}
If $G$ is a $Z$-group, then $\textnormal{diam}(\Delta(G))\le 2$ if and only if $\centerof{G}\neq\id$.
\end{theorem}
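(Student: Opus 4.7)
The plan is to prove both directions directly: the forward direction by exhibiting a dominating vertex, and the converse by producing a pair of vertices at distance $>2$ using the structure theorem of Rose.

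For ($\Leftarrow$), pick any nontrivial $z\in\centerof{G}$. For any $g\in G^{\#}\setminus\{z\}$ the subgroup $\langle z,g\rangle$ is abelian (since $z$ is central) and is itself a $Z$-group, because subgroups of $Z$-groups are $Z$-groups; but abelian $Z$-groups are cyclic, so $\langle z,g\rangle$ is cyclic. Hence $z\approx g$ for every $g\in G^{\#}\setminus\{z\}$, i.e.\ $z$ is a dominating vertex of $\Delta(G)$, and any two distinct vertices lie at distance at most two via $z$.

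For ($\Rightarrow$), I would argue by contrapositive: assume $\centerof{G}=\id$ and produce two vertices at distance $>2$. Abelian $Z$-groups are cyclic and therefore have nontrivial center, hence we may assume $G$ is nonabelian. Then the Rose theorem gives that $G'$ is cyclic and nontrivial and is a Hall subgroup admitting a cyclic complement $H\cong G/G'$, exactly as used in the proof of Theorem~\ref{disconnected}. Choose generators $a$ of $G'$ and $b$ of $H$; both are nonidentity, and $G=\langle a,b\rangle$.

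It then suffices to show that $a$ and $b$ admit neither an edge nor a common neighbor in $\Delta(G)$. If $a\approx b$, then $G=\langle a,b\rangle$ would be cyclic, contradicting that $G$ is nonabelian. If some $c\in G^{\#}$ were a common neighbor, then $c$ would commute with both $a$ and $b$, whence $c\in\centralizer{G}{a}\cap\centralizer{G}{b}=\centralizer{G}{\langle a,b\rangle}=\centralizer{G}{G}=\centerof{G}=\id$, contradicting $c\ne 1$. Thus $d(a,b)>2$, forcing $\textnormal{diam}(\Delta(G))>2$. No serious obstacle is expected beyond invoking the structure theorem to pin down the single generators $a$ and $b$; the remainder is a one-line centralizer computation.
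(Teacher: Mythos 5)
Your proposal is correct and follows essentially the same route as the paper: a nontrivial central element is a dominating vertex (you justify this via ``abelian $Z$-subgroups are cyclic,'' the paper via the unique subgroup of prime order, a minor variation), and for the other direction both arguments take generators of the cyclic Hall subgroup $G'$ and of a cyclic complement $H$ and observe that any common neighbor centralizes $G'H=G$, yours merely phrased contrapositively. No gaps worth noting.
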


\begin{proof}
Assume that $\centerof{G}\neq\id$.
Fix $z\in\centerof{G}^{\#}$ with $o(z)=p$, a prime.
Since $\langle z\rangle$ is a normal $p$-subgroup of $G$ and every Sylow $p$-subgroup of $G$ is cyclic, $\langle z \rangle$ is the unique subgroup of $G$ with order $p$.
If $g\in G^{\#}$ and $p$ divides $o(g)$, then $\langle z\rangle\le\langle g\rangle$.
Hence $g\approx z$.
Otherwise, $o(g)$ is a $p'$-number, and so $g$ and $z$ are commuting elements with coprime orders.
Again, $g\approx z$.

Assume that $\textnormal{diam}(\Delta(G))\le 2$.
Let $H$ be a complement of $G'$.
Set $G'=\langle x \rangle$.
As $G/G'\cong H$, the subgroup $H$ is cyclic.
Set $H=\langle h\rangle$.
If $x\approx h$, then $G$ is abelian, and so $\centerof{G}=G\neq\id$.
Otherwise, $x\approx z\approx h$ for some $z\in G^{\#}$.
Now, $G'=\langle x\rangle\le\centralizer{G}{z}$ and $H=\langle h\rangle\le\centralizer{G}{z}$.
It follows that $G=G'H\le\centralizer{G}{z}$.
Hence $z\in\centerof{G}^{\#}$.
\end{proof}

Let $G$ be a group, and recall that a vertex $z$ in $\Delta(G)$ is called a \textit{dominating vertex} if $z\approx g$ for each $g\in\Delta(G)\setminus\{z\}$.
A dominating vertex appears in the previous proof, and the following theorem highlights a necessary and sufficient condition for such a vertex to exist.

\begin{theorem}\label{dominating}
The cyclic graph of a group $G$ has a dominating vertex if and only if $G$ has a unique subgroup of order $p$ for some prime $p$ and this subgroup is central.
\end{theorem}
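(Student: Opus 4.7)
The plan is to argue both directions directly from the definition of a dominating vertex, using only the elementary fact that a cyclic group contains at most one subgroup of each order.

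For the forward direction, suppose $z$ is a dominating vertex of $\Delta(G)$. Since $z \approx g$ forces $[z,g]=1$, the element $z$ commutes with every nonidentity element of $G$, hence $z \in \centerof{G}$. Choose any prime $p$ dividing $o(z)$ and let $y \in \langle z\rangle$ be the unique element of order $p$; then $y \in \centerof{G}$ as well. The key step is to show $\langle y\rangle$ is the unique subgroup of $G$ of order $p$. Given any $g \in G$ of order $p$, either $g=z$ (in which case $o(z)=p$ and $\langle g\rangle=\langle z\rangle=\langle y\rangle$), or $g\neq z$, and then $\langle z,g\rangle$ is cyclic by the dominating property; since a cyclic group has at most one subgroup of each order, the two order-$p$ subgroups $\langle y\rangle$ and $\langle g\rangle$ of $\langle z,g\rangle$ must coincide.

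For the backward direction, assume some $\langle z\rangle \leq \centerof{G}$ is the unique subgroup of $G$ of order $p$. I claim $z$ itself is a dominating vertex. Fix $g \in G^{\#}$ with $g \neq z$, and split into two cases. If $p \mid o(g)$, then $\langle g\rangle$ contains a subgroup of order $p$, which by uniqueness must equal $\langle z\rangle$; thus $z \in \langle g\rangle$ and $\langle z,g\rangle = \langle g\rangle$ is cyclic. If $p \nmid o(g)$, then $z$ and $g$ commute (by centrality) and have coprime orders, so $\langle z,g\rangle = \langle zg\rangle$ is cyclic. Either way $z \approx g$.

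I do not expect any serious obstacle. The only point requiring a little care is that a dominating vertex $z$ need not itself have prime order, which is why the forward direction descends to a prime-order element $y \in \langle z\rangle$ before asserting uniqueness of a subgroup of order $p$. Everything else reduces to the ``unique subgroup of each order'' property of cyclic groups and the standard observation that commuting coprime-order elements generate a cyclic subgroup.
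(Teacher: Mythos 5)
Your proof is correct and follows essentially the same route as the paper: pass to a prime-order element inside the cyclic subgroup generated by the dominating vertex, use the fact that a cyclic group has at most one subgroup of each order to get uniqueness and centrality, and for the converse split on whether $p$ divides $o(g)$, using either the unique subgroup of order $p$ or the coprime commuting-elements argument. The only cosmetic difference is that the paper first observes the prime-order power is itself a dominating vertex, whereas you compare the two order-$p$ subgroups directly inside $\langle z,g\rangle$; both hinge on the same facts.
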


\begin{proof}
Let $c$ be a dominating vertex of $\Delta(G)$.
For suitable integer $t$, $o(c^{t})=p\in\mathbb{P}$.
For each $g\in\Delta(G)\setminus\{c^{t}\}$, $$\langle c^{t}, g\rangle\le\langle c,g\rangle,$$ and so $c^{t}$ is a dominating vertex as well.
Note that $\langle c^{t}\rangle$ is a central subgroup of prime order.
Suppose that $\langle y\rangle$ has order $p$.
The subgroup $\langle c^{t},y\rangle$ is cyclic, and therefore has a unique subgroup of order $p$.
Hence $\langle c^{t}\rangle=\langle y\rangle$.

Conversely, suppose that $\langle z\rangle$ is a central subgroup of order $p\in\mathbb{P}$ and, further, that $\langle z\rangle$ is the \textit{unique} subgroup of order $p$.
If $g\in G$ is $p'$-element, then $z\approx g$ since $z$ and $g$ are commuting elements with coprime orders.
If $p$ divides $o(g)$, then $|\langle g^{t}\rangle|=p$ for suitable integer $t$.
Our uniqueness hypothesis forces $\langle z\rangle=\langle g^{t}\rangle$.
Again, $z\approx g$.
The element $z$ is a dominating vertex.
\end{proof}

The relationship between the existence of a dominating vertex for the cyclic graph of a group and the Sylow subgroup structure of the group can be developed a bit further.
Let $G$ be a group, $g\in G$, and $\pi=\pi(o(g))$.
Using Theorem 5.1.5 in \cite{wrscott}, write $g=\prod_{p\in\pi}g_{p}$, where each $g_{p}$ is a $p$-element for $p\in\pi$ and $g_{p}g_{q}=g_{q}g_{p}$ for all $p,q\in\pi$.
Then $g$ is a dominating vertex for $\Delta(G)$ if and only if, for each $p\in\pi$, a Sylow $p$-subgroup $P$ of $G$ is cyclic or generalized quaternion and $\langle g_{p}\rangle\le P\cap\centerof{G}$.
We remark that this result strengthens Theorem \ref{dominating}---and has essentially the same proof.

Bera and Bhuniya \cite{bera} showed that if $G$ is abelian, then  $\Delta(G)$ is dominatable if and only if $G$ has a cyclic Sylow subgroup.
We generalize this result.

\begin{cor}
If $G$ is a nilpotent group, then $\Delta(G)$ is dominatable if and only if $G$ has a cyclic or generalized quaternion Sylow subgroup.
\end{cor}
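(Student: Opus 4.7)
The plan is to deduce this corollary directly from the strengthened version of Theorem~\ref{dominating} stated in the paragraph just above, which characterizes dominating vertices of $\Delta(G)$ through the Sylow structure of $G$. The key structural input will be the standard fact that since $G$ is nilpotent, $G$ is the internal direct product $G=\prod_{p\in\pi(G)}P_{p}$ of its Sylow subgroups, and consequently $\centerof{G}=\prod_{p}\centerof{P_{p}}$; in particular, $P_{p}\cap\centerof{G}=\centerof{P_{p}}$ for each prime $p$.

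For the forward direction, I would argue as follows. Suppose $\Delta(G)$ has a dominating vertex $z$. Since $z\neq 1$, the set $\pi(o(z))$ is nonempty; picking any $p$ in it and applying the cited strengthened result immediately yields that a Sylow $p$-subgroup of $G$ is cyclic or generalized quaternion.

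For the converse, suppose some Sylow $p$-subgroup $P$ of $G$ is cyclic or generalized quaternion. Both cyclic $p$-groups and generalized quaternion $2$-groups have nontrivial center (the latter has center of order $2$), so I would pick $z\in\centerof{P}\setminus\id$. By the direct product decomposition above, $z$ lies in $\centerof{G}$, and since $o(z)$ is a power of $p$, in the notation of the theorem we have $\pi(o(z))=\{p\}$ and $z_{p}=z$, so $\langle z_{p}\rangle\le P\cap\centerof{G}$. The hypotheses are satisfied with $g=z$, and it follows that $z$ is a dominating vertex of $\Delta(G)$.

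I do not anticipate any real obstacle here: the strengthened theorem was stated exactly so that this corollary would drop out, and the only extra ingredient is the standard structural fact that a nilpotent group is the direct product of its Sylow subgroups, together with the observation that cyclic and generalized quaternion groups have nontrivial center.
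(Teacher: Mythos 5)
Your proof is correct, but it runs along a slightly different track than the paper's. The paper's written proof in Section~\ref{sec: main results} does not invoke the strengthened statement (Theorem D) at all: for the forward direction it uses the proved Theorem~\ref{dominating} to get a unique central subgroup $\langle x\rangle$ of order $p$, observes that $\langle x\rangle$ is then the unique subgroup of order $p$ in the (normal) Sylow $p$-subgroup $P$, and concludes $P$ is cyclic or generalized quaternion; for the converse it verifies by hand that a generator $z$ of the unique order-$p$ subgroup of $G$ is a dominating vertex, using that in a nilpotent group $z$ commutes with every $p'$-element and that uniqueness forces $\langle z\rangle=\langle g^{s}\rangle$ when $p\mid o(g)$. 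You instead quote the strengthened characterization of dominating vertices and feed it a central element $z\in\centerof{P}\setminus\id$, using $\centerof{G}=\prod_{q}\centerof{P_{q}}$ to place $z$ in $\centerof{G}$; your forward direction needs no nilpotency at all. Both arguments are sound; yours is shorter and makes clear that only the converse uses nilpotency, but it leans on the strengthened theorem, which the paper states only with the remark that it ``has essentially the same proof'' as Theorem~\ref{dominating} and never proves in detail, so the paper's route has the advantage of resting entirely on results it actually proves. If you want your version to be fully self-contained you would either prove the strengthened statement or, as the paper does, argue the converse directly from uniqueness of the order-$p$ subgroup.
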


\begin{proof}
If $\Delta(G)$ has a dominating vertex, then, by Theorem \ref{dominating}, $G$ has a unique subgroup $\langle x\rangle$ of prime order, say $p$, that is contained in $\centerof{G}$.
It is easy to check that if $P$ is the Sylow $p$-subgroup of $G$, then $\langle x\rangle$ is the unique subgroup of $P$ order $p$; hence, $P$ is cyclic or generalized quaternion.

Conversely, suppose that $G$ has a Sylow $p$-subgroup $P$ that is cyclic or generalized quaternion.
Let $\langle z\rangle$ be the unique subgroup of $G$ of order $p$.
Let $g\in G^{\#}$.
If $o(g)$ is a $p'$-number, then $z\approx g$ as $z$ and $g$ are therefore commuting elements with coprime orders.
If $p$ divides $o(g)$, then $|\langle g^{s}\rangle|=p$ for suitable integer $s$.
Hence $\langle z\rangle =\langle g^{s}\rangle$, and so $z$ is a power of $g$.
Again, $z\approx g$.
We conclude that $z$ is a dominating vertex.
\end{proof}

As mentioned previously, if $G$ is a $Z$-group with $\centerof{G}=\id$, then $\Gamma(G)=\Delta(G)$. 
We now obtain information about the commuting graph $\Gamma(G)$ of a $Z$-group $G$ with trivial center.

\begin{cor}
Let $G$ be a $Z$-group with $\centerof{G}=\id$.
If $G$ is not a Frobenius group, then $\Gamma(G)$ is connected with $\textnormal{diam}(\Gamma(G))\in\{3,4\}$.
\end{cor}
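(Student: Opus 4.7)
The proof is essentially an assembly of the previously established theorems, so my plan is to show there is no real content beyond citing them in the right order.

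First, I would reduce from the commuting graph to the cyclic graph. By Lemma \ref{gamma=delta}, the hypothesis that $G$ is a $Z$-group with $\centerof{G}=\id$ gives $\Gamma(G)=\Delta(G)$ as graphs, so it suffices to prove the conclusion for $\Delta(G)$.

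Next, I would obtain connectedness and the upper bound by invoking Theorem \ref{disconnected}: since $G$ is not a Frobenius group, $\Delta(G)$ is connected and $\textnormal{diam}(\Delta(G))\le 4$.

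For the lower bound, I would use the contrapositive of Theorem \ref{nontrivialcenterdiameter2}: since $\centerof{G}=\id$, we cannot have $\textnormal{diam}(\Delta(G))\le 2$, so $\textnormal{diam}(\Delta(G))\ge 3$. Combining gives $\textnormal{diam}(\Gamma(G))=\textnormal{diam}(\Delta(G))\in\{3,4\}$, as required.

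There is no real obstacle here; the only thing worth a sanity check is that the two examples mentioned after Theorem \ref{disconnected} (\texttt{SmallGroup(60,7)} and \texttt{SmallGroup(210,2)}) show that both values $3$ and $4$ actually occur for centerless $Z$-groups that are not Frobenius, so the statement $\textnormal{diam}(\Gamma(G))\in\{3,4\}$ is the sharpest one possible.
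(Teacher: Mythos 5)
Your argument is correct and is essentially the paper's own proof: reduce to $\Delta(G)$ via Lemma \ref{gamma=delta}, get connectedness and $\textnormal{diam}\le 4$ from Theorem \ref{disconnected}, and rule out diameter $\le 2$ via Theorem \ref{nontrivialcenterdiameter2} since $\centerof{G}=\id$. The closing remark about the two examples is a nice sharpness check but is not needed for the statement itself.
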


\begin{proof}
By Lemma \ref{gamma=delta}, $\Gamma(G)=\Delta(G)$.
Since $G$ is not a Frobenius group, Theorem \ref{disconnected} yields that $\Gamma(G)$ is connected.
Theorem \ref{nontrivialcenterdiameter2} gives us that $\textnormal{diam}(\Gamma(G))\ge 3$.
Finally, an application of Theorem \ref{disconnected} implies that $\textnormal{diam}(\Gamma(G))$ is either $3$ or $4$.
\end{proof}

\end{document}